\documentclass{article}
\usepackage[preprint,nonatbib]{neurips_2019}
\usepackage[utf8]{inputenc}

\usepackage{nicefrac}

\usepackage{amsmath}
\usepackage{amsthm}
\usepackage{amssymb}
\usepackage{amsfonts}

\usepackage{booktabs} 
\usepackage{multirow}

\usepackage{url}

\usepackage{listings}

\lstset{
    language = Python,
    otherkeywords={with},
    numberstyle=\tiny,          
    numbersep=5pt,              
    tabsize=2,                  
    breaklines=true,            
}

\usepackage{graphicx}

\newtheorem{proposition}{Proposition}
\newtheorem{lemma}{Lemma}
\newtheorem{claim}{Claim}
\newtheorem{theorem}{Theorem}
\newtheorem{remark}{Remark}
\newtheorem{corollary}{Corollary}

\usepackage{hyperref}
\usepackage{cleveref}

\crefname{proposition}{Proposition}{Propositions}
\crefname{claim}{Claim}{Claims}

\title{Revisiting Concentration of Missing Mass}
\author{Maciej Skorski}

\begin{document}

\maketitle

\begin{abstract}
We revisit the problem of \emph{missing mass concentration}, developing a new method of estimating concentration of heterogenic sums, in spirit of celebrated Rosenthal's inequality.
As a result we slightly improve the state-of-art bounds due to Ben-Hamou at al., and simplify the proofs.
\end{abstract}

\section{Introduction}

\subsection{Missing Mass Problem}

Imagine drawing $n$ independent samples $S_n=(Y_1,\ldots,Y_n)$ from a distribution $Y$ over $1,2,\ldots,m$. 
The \emph{missing mass}
is defined as the total weight of \emph{not sampled values}
\begin{align}\label{eq:miss_mass}
M= \sum_i \Pr[Y=i] \cdot \mathbb{I}(i\not \in S_n).
\end{align}
Estimating the missing mass is important in ecology (existence of species not observed in a specific sample) and natural language modeling (encountring out-of-vocabulary words). Practical estimates are obtained by the Good-Turing estimator~\cite{acharya2018improved}.
In this paper we are interested in obtaining \emph{good confidence intervals} for $M$. 

The first obstacle is that $M$ is a sum of dependent components. Indeed, occurrences of two different elements in a sample are \emph{negatively correlated}. Fortunately, this dependence actually helps to shrink confidence intervals:
utilizing \emph{negative dependence} theory~\cite{dubhashi1996balls,joag1983negative} one obtains the stochastic domination~\cite{ortiz2003concentration}
\begin{align}\label{eq:domination}
M \leqslant^{MGF} M'=\sum_i  X_i,\quad   X_i \overset{d}{=} \Pr[Y=i]\cdot \mathbb{I}_{i\not\in S_n }\text{ and } X_i \text{ are independent}.
\end{align}
so the task essentially reduces to the well-investigated problem of studying sums of independent random variables. 
Unfortunately popular concentration inequalities due to Chernoff, Hoefdding, Bernstein etc. do not give good results 
because of \emph{heterogeneous summands}~\cite{ortiz2003concentration}. 

In particular, improving the original weaker bounds~\cite{mcallester2000convergence} to gaussian-like tails
\begin{align}\label{eq:gaussian_tails}
\Pr[ | M -\mathbf{E}M| > \epsilon ] \leqslant \mathrm{e}^{-\Omega(n\epsilon^2)}.
\end{align}
due to ~\cite{ortiz2003concentration},
with constants improved in subsequent works~\cite{mcallester2003concentration,ortiz2003concentration,berend2013concentration,DBLP:conf/isit/ChandraT19},  took considerable effort relying on deep facts (sharp logarithmic Sobolov inequalities) for subgaussian norms~\cite{kearns2013large,buldygin2013sub}.
The state-of-art bounds were found recently in~\cite{ben2017concentration} and depends on some distribution-dependent constants
\begin{align}\label{eq:gaussian_tails}
\Pr[ | M -\mathbf{E}M| > \epsilon ] \leqslant \mathrm{e}^{-\Omega\left(\frac{\epsilon^2}{v^2(Y) + b(Y)\epsilon }\right)}
\end{align}
Under some conditions on the distribution $Y$ this is sharp up to absolute constants in the exponent.

In this paper we develop a very elementary Rosenthal-type concentration bound for sums of heterogenic terms (of independent interests and with more applications), and use it to analyze the missing mass.
In arguably simpler manner we obtain the state-of-art bounds, and demonstrate better results for some theoretical settings and quantitatively in numerical evaluation.


\subsection{Our Contribution}


\subsubsection{Mistake in Exponential Bounds~\cite{khanloo2015novel}}

We note that there is a result~\cite{khanloo2015novel} \emph{much superior} tail bounds of $\mathrm{e}^{-\tilde{\Omega}(n\epsilon)}$. It was ignored in most recent works~\cite{DBLP:conf/isit/ChandraT19,ben2017concentration} and likely contains a mistake.
The central idea in~\cite{khanloo2015novel} is to re-group the terms in \Cref{eq:miss_mass} by considering another distribution $Y'$ which splits or combines weights of $Y$ in such a way that the stochastic domination still holds. The aim is to make the weights more homogeneous so that standard concentration inequalities can be successfully applied. This is different from other works
~\cite{mcallester2003concentration,ortiz2003concentration,berend2013concentration,DBLP:conf/isit/ChandraT19,ben2017concentration} which work with $Y$ as it is given. 

However the technique is critically based on Lemma 9 which claims that "absorption" preserves negative dependency. 
More precisely consider 
$w_i = \sum_j [Y_j = i]$, the number of occurrences of $i$; then $w_i$ are negatively dependent; the absorption is understood as redistributing the mass from one bin evenly among others, e.g. $w'_i = w_i + \frac{w_m}{m-1}$ for $i=1,\ldots,m-1$. The lemma claims that 
$w'_i$ are still negatively dependent by invoking the result for the IID case with $m-1$ symbol, but the modified sample doesn't meet the IID assumptions.

\subsubsection{Technical Result: Bounds for Heterogenic Sums}

Although previous works~\cite{mcallester2003concentration,ortiz2003concentration,berend2013concentration,DBLP:conf/isit/ChandraT19}
emphasize the difficulty in applying "standard bounds", none of them gave a try to well-known bounds developed for \emph{independent heterogenic sums}. We recall the celebrated result due to Rosenthal, with optimal constants found in~\cite{ibragimov1998exact} and ~\cite{pinelis1994optimum}.
\begin{proposition}[Rosenthal's Inequality]\label{lemma:Rosenthal}
Let $X_i$ be zero-centered random variables and $k\geqslant 2$. Then
\begin{align}
\left(\mathbf{E} \left|\sum_i X_i\right|^k\right)^{1/k} \leqslant C \max\left( \sum_i \left(\mathbf{E} X_i^2\right)^{1/2},  \left(\sum_i \mathbf{E} |X_i|^k\right)^{1/k}\right)
\end{align}
where $C =O( k/\log k)$. One can also take asymmetric constants
\begin{align}
\left(\mathbf{E} \left|\sum_i X_i\right|^k\right)^{1/k} \leqslant c \sum_i \left(\mathbf{E} X_i^2\right)^{1/2} + C \left(\sum_i \mathbf{E} |X_i|^k\right)^{1/k}
\end{align}
where $c= O(k^{1/2})$ and $C = O(k)$. 
\end{proposition}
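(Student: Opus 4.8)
The plan is to prove the bound first for $k=2m$ an even integer, by a generating-function expansion, and then to pass to real $k\ge 2$ by interpolation of $L^p$ norms --- a routine step costing only a constant factor, and in any case unnecessary for moment-method tail bounds, where one optimises the exponent. Throughout we take the $X_i$ independent, as used in the product expansion below and as in \eqref{eq:domination}. Writing $S=\sum_i X_i$ and expanding multinomially, independence gives $\mathbf{E}S^{2m}=\sum_{|a|=2m}\binom{2m}{a}\prod_i\mathbf{E}X_i^{a_i}$; every $a$ with some $a_i=1$ contributes $0$ because $\mathbf{E}X_i=0$, and otherwise $\prod_i\mathbf{E}X_i^{a_i}\le\prod_i|\mathbf{E}X_i^{a_i}|\le\prod_i\mathbf{E}|X_i|^{a_i}$. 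Packaging the surviving exponent patterns as a product of power series, $\mathbf{E}S^{2m}\le(2m)!\,[z^{2m}]\,\Phi(z)$, where $[z^{2m}]$ denotes the coefficient of $z^{2m}$ and
\begin{align*}
\Phi(z)\ =\ \prod_i\Bigl(1+\sum_{j\ge 2}\tfrac{\mathbf{E}|X_i|^j}{j!}\,z^j\Bigr)\ =\ \prod_i\bigl(1+g_i(z)\bigr),\qquad g_i(z):=\mathbf{E}\bigl[e^{z|X_i|}-1-z|X_i|\bigr],
\end{align*}
each $g_i$ having non-negative Taylor coefficients.

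Since $e^{g_i(z)}$ dominates $1+g_i(z)$ coefficientwise and this ordering is preserved under products of series with non-negative coefficients, $\Phi(z)$ is coefficientwise at most $\exp\!\bigl(\sum_i g_i(z)\bigr)=\exp\!\bigl(\sum_{j\ge 2}\tfrac{\Sigma_j}{j!}z^j\bigr)$ with $\Sigma_j:=\sum_i\mathbf{E}|X_i|^j$, so the exponential formula gives
\begin{align*}
\mathbf{E}S^{2m}\ \le\ \sum_{\pi}\ \prod_{B\in\pi}\Sigma_{|B|},
\end{align*}
the sum over set partitions $\pi$ of $\{1,\dots,2m\}$ with no singleton block. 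I would then collapse every $\Sigma_j$ ($2\le j\le 2m$) to the two extremes $\Sigma_2=\sum_i\mathbf{E}X_i^2$ and $\Sigma_{2m}=\sum_i\mathbf{E}|X_i|^{2m}$: log-convexity of $t\mapsto\log\mathbf{E}|X_i|^t$, then Hölder in $i$, give $\Sigma_j\le\Sigma_2^{\,1-\theta_j}\Sigma_{2m}^{\,\theta_j}$ with $\theta_j=\tfrac{j-2}{2m-2}$; multiplying over the blocks of $\pi$ (with $\sum_{B}(|B|-2)=2m-2|\pi|$) exhibits $\prod_{B}\Sigma_{|B|}$ as a weighted geometric mean of $\Sigma_2^{\,m}$ and $\Sigma_{2m}$ --- the two exponents are non-negative and sum to one --- so $\prod_{B}\Sigma_{|B|}\le\max(\Sigma_2^{\,m},\Sigma_{2m})$. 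Hence
\begin{align*}
\bigl(\mathbf{E}|S|^{2m}\bigr)^{1/2m}\ \le\ N_{2m}^{\,1/2m}\,\max\!\Bigl(\bigl(\textstyle\sum_i\mathbf{E}X_i^2\bigr)^{1/2},\ \bigl(\textstyle\sum_i\mathbf{E}|X_i|^{2m}\bigr)^{1/2m}\Bigr),
\end{align*}
with $N_{2m}$ the number of singleton-free set partitions of a $2m$-set; as $(\sum_i\mathbf{E}X_i^2)^{1/2}\le\sum_i(\mathbf{E}X_i^2)^{1/2}$, this is already the claimed inequality, up to the value of $N_{2m}^{1/2m}$.

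The sharp constant is exactly the exponential growth rate of this count. Since $N_{2m}\le\mathcal{B}_{2m}$, the Bell number, and $\mathcal{B}_n^{1/n}=\Theta(n/\log n)$, we obtain $C=O(k/\log k)$ for $k=2m$. For the asymmetric statement I would peel off from $\sum_\pi$ the perfect matchings (all blocks of size two): there are $(2m-1)!!$ of these, each equal to $\Sigma_2^{\,m}$, and $\bigl((2m-1)!!\bigr)^{1/2m}=O(\sqrt m)$, giving the coefficient $c=O(k^{1/2})$ on $\sum_i(\mathbf{E}X_i^2)^{1/2}$; every other partition contains a block of size $\ge 3$, and a short accounting of $\theta_j$ on those partitions (or, alternatively, symmetrisation plus the conditional Khintchine bound $\|\sum_i\varepsilon_i X_i\|_k\le O(\sqrt k)\,\|(\sum_i X_i^2)^{1/2}\|_k$ and one application of the symmetric estimate at exponent $k/2$ to the centred square function $\sum_i(X_i^2-\mathbf{E}X_i^2)$) produces $C=O(k)$ on $(\sum_i\mathbf{E}|X_i|^k)^{1/k}$.

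The main obstacle is pinning down this \emph{right} combinatorial object and its \emph{exact} growth: a crude bound like $N_{2m}\le(2m)^{2m}$ already costs a factor and yields only $C=O(k)$, whereas the refinement to $O(k/\log k)$ rests on the Bell rate $\mathcal{B}_n^{1/n}\sim n/(e\log n)$ and on the exclusion of singleton blocks forced by $\mathbf{E}X_i=0$. Everything else --- the generating-function identity, the log-convexity/Hölder collapse to $\max(\Sigma_2^{\,m},\Sigma_{2m})$, and the matching count --- is bookkeeping; the precise Bell asymptotics, and the interpolation from even integers to all real $k\ge 2$, are what \cite{ibragimov1998exact,pinelis1994optimum} provide.
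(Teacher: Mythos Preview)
The paper does not prove this proposition: it is recalled as a classical result, with the optimal constants attributed to \cite{ibragimov1998exact} and \cite{pinelis1994optimum}, and is then used as a black box in the proof of \Cref{cor:rosenthal}. So there is no ``paper's own proof'' to compare against.

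That said, your argument for the symmetric bound is a correct and clean route. The chain --- multinomial expansion with the $a_i=1$ terms killed by centring, the coefficientwise domination $\prod_i(1+g_i)\le\exp(\sum_i g_i)$, the exponential-formula identification with singleton-free set partitions, the H\"older/log-convexity collapse $\prod_{B\in\pi}\Sigma_{|B|}\le(\Sigma_2^{\,m})^{\alpha}\Sigma_{2m}^{\,1-\alpha}\le\max(\Sigma_2^{\,m},\Sigma_{2m})$, and finally $N_{2m}^{1/2m}\le\mathcal{B}_{2m}^{1/2m}=\Theta(m/\log m)$ --- is sound, and in fact yields the sharper quantity $(\sum_i\mathbf{E}X_i^2)^{1/2}$ rather than the paper's $\sum_i(\mathbf{E}X_i^2)^{1/2}$.

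The asymmetric version is where your sketch is thin. Peeling off the $(2m-1)!!$ perfect matchings correctly isolates the $c=O(k^{1/2})$ term, but ``a short accounting of $\theta_j$ on those partitions'' is not yet a proof that the remainder is $O(k)\cdot\Sigma_{2m}^{1/2m}$: the non-matching partitions still number $\Theta(\mathcal{B}_{2m})$, and the weighted-geometric-mean bound you used for the symmetric case only gives $\max(\Sigma_2^{\,m},\Sigma_{2m})$ again, not a pure $\Sigma_{2m}$ contribution. The symmetrisation-plus-Khintchine route you mention does work, but it is a genuinely different argument and needs to be carried out, not just cited. Since the paper actually invokes the asymmetric constants in the proof of \Cref{cor:rosenthal}, this is the one place where your write-up would need to be completed rather than merely polished.
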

Already this inequality gives gaussian tails for the missing mass. 
To further improve we develop the following inequality, which needs to control same terms as in Rosenthal's inequality. The proof is basically one application of the AM-GM inequality!
\begin{theorem}[Bounds for heterogenic independent sums]\label{main:lemma}
Let $X_i$ for $i=1,2,\ldots,m$ be zero-centered independent random variables. Then for any real $t$ it holds that
$$
\mathbf{E} \exp\left( t\sum_{i=1}^{m} X_i \right) \leqslant  \left(1+\frac{1}{m}\sum_{k\geqslant 2}\frac{t^k}{k!} \cdot \sum_{i=1}^{m} \mathbf{E}X_i^k \right)^m.
$$
Moreover, the equality is achieved when $X_i$ are identitically distributed.
\end{theorem}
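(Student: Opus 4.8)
The plan is to prove the bound by reducing it to a single application of the arithmetic--geometric mean inequality, exactly as the statement advertises. First I would use independence to factorize the moment generating function,
\[
\mathbf{E}\exp\Bigl(t\sum_{i=1}^{m}X_i\Bigr)=\prod_{i=1}^{m}\mathbf{E}\exp(tX_i).
\]
Next, for each $i$ I would expand $\exp(tX_i)$ in its Taylor series and integrate term by term; since $X_i$ is zero-centered the degree-one term vanishes, so
\[
a_i:=\mathbf{E}\exp(tX_i)=1+\sum_{k\geqslant 2}\frac{t^k}{k!}\,\mathbf{E}X_i^k .
\]
Each $a_i$ is the expectation of a strictly positive random variable, hence a positive real, so $a_1,\dots,a_m$ are admissible inputs to AM--GM, giving $\prod_{i=1}^{m}a_i\leqslant\bigl(\tfrac{1}{m}\sum_{i=1}^{m}a_i\bigr)^{m}$. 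It then remains only to simplify the arithmetic mean,
\[
\frac{1}{m}\sum_{i=1}^{m}a_i=1+\frac{1}{m}\sum_{k\geqslant 2}\frac{t^k}{k!}\sum_{i=1}^{m}\mathbf{E}X_i^k ,
\]
which is precisely the base appearing on the right-hand side of the theorem; chaining the displays yields the claim.

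For the equality assertion, recall that AM--GM is tight exactly when $a_1=\dots=a_m$. If the $X_i$ are identically distributed, then the functions $t\mapsto\mathbf{E}\exp(tX_i)$ all coincide, so the AM--GM step holds with equality and the entire chain does too; no other step introduces slack.

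The only step that is not purely mechanical --- and the closest thing here to an obstacle --- is justifying the term-by-term integration $\mathbf{E}\exp(tX_i)=\sum_{k\geqslant 0}\tfrac{t^k}{k!}\mathbf{E}X_i^k$. This holds under the mild integrability that is implicit in the statement (and is automatic when the $X_i$ are bounded, which is the regime relevant to the missing-mass application): one either dominates the partial sums by $\exp(|t|\,|X_i|)$ and applies dominated convergence, or splits $X_i$ into its positive and negative parts and applies Tonelli to each series separately. Beyond this bookkeeping I do not anticipate any difficulty: the whole content of the theorem is the one application of AM--GM to the numbers $\mathbf{E}\exp(tX_i)$.
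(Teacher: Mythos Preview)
Your proof is correct and matches the paper's argument essentially step for step: factorize by independence, apply AM--GM to the factors $a_i=\mathbf{E}\exp(tX_i)$, and use the Taylor expansion together with $\mathbf{E}X_i=0$ to identify the base. The only cosmetic difference is that the paper invokes Jensen's inequality to note $a_i\geqslant 1$ (equivalently $z_i:=a_i-1\geqslant 0$) rather than just $a_i>0$, but either observation suffices to make the AM--GM step legitimate.
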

\begin{remark}[Further improvements]
The AM-GM inequality can be refined for non-identical variables, for example due to \emph{self-improving} properties~\cite{aldaz2008selfimprovemvent}.
This can be used to improve the confidence interval obtained from \Cref{main:lemma} in numerical computations.
\end{remark}

\subsubsection{Application: Confidence Bounds for Missing Mass}

As a corollary from \Cref{lemma:Rosenthal} we obtain
\begin{corollary}[Concentration of Missing Mass]\label{cor:rosenthal}
For $\epsilon = O(n^{-1/3})$ we have that
\begin{align}
\Pr[|M-\mathbf{E}M| > \epsilon] \leqslant \exp\left(-\Omega(n\epsilon^2)\right).
\end{align}
\end{corollary}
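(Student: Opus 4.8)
The plan is to apply Rosenthal's inequality (\Cref{lemma:Rosenthal}) to the dominating sum $M' = \sum_i X_i$ with $X_i \overset{d}{=} \Pr[Y=i]\cdot\mathbb{I}_{i\notin S_n}$ centered, and then convert the moment bound into a tail bound via Markov's inequality with an optimized moment order $k$. First I would record the relevant moment parameters: writing $p_i = \Pr[Y=i]$ and $q_i = (1-p_i)^n$, the centered variable $X_i - \mathbf{E}X_i = p_i(\mathbb{I}_{i\notin S_n} - q_i)$ is bounded by $p_i$ in absolute value and has variance $p_i^2 q_i(1-q_i) \le p_i^2 q_i$. Hence $\sum_i \mathbf{E}(X_i-\mathbf{E}X_i)^2 \le \sum_i p_i^2 q_i =: \sigma^2$, and $\sum_i \mathbf{E}|X_i-\mathbf{E}X_i|^k \le \sum_i p_i^k q_i \le \sigma^2$ for $k\ge 2$ (since $p_i\le 1$). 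The standard estimate $\sum_i p_i^2 q_i = O(1/n)$ — because $x(1-x)^n \le \frac{1}{e n}$ for $x \in [0,1]$ combined with $\sum_i p_i = 1$ gives $\sum_i p_i^2 q_i \le \max_i p_i(1-p_i)^n \le \frac{1}{en}$ — yields $\sigma^2 = O(1/n)$.

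Next I would invoke the asymmetric form of \Cref{lemma:Rosenthal}: for even integer $k\ge 2$,
\begin{align}
\left(\mathbf{E}|M'-\mathbf{E}M'|^k\right)^{1/k} \le c k^{1/2}\, \sigma\, + C k \left(\sum_i \mathbf{E}|X_i-\mathbf{E}X_i|^k\right)^{1/k} \le c k^{1/2}\sigma + Ck\,\sigma^{2/k},\nonumber
\end{align}
using $\sum_i \mathbf{E}|X_i - \mathbf{E}X_i|^k \le \sigma^2$. Then Markov's inequality gives $\Pr[|M'-\mathbf{E}M'| > \epsilon] \le \epsilon^{-k}\left(c k^{1/2}\sigma + Ck\sigma^{2/k}\right)^k$, and since the MGF domination \eqref{eq:domination} implies the same moment inequalities for $M$ itself (moments are controlled by the MGF, or one argues directly via negative association that central moments of $M$ are dominated), the bound transfers to $M$. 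The remaining work is to choose $k$ so that both terms in the bracket are at most, say, $\epsilon/(2e)$, giving a bound $\mathrm{e}^{-k}$; this requires $k \lesssim \epsilon^2/\sigma^2 \asymp n\epsilon^2$ from the first term and also $k\sigma^{2/k} \lesssim \epsilon$ from the second.

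The main obstacle — and the reason for the restriction $\epsilon = O(n^{-1/3})$ — is balancing the second term $Ck\sigma^{2/k}$. The factor $\sigma^{2/k} = \exp\left(\frac{2\ln(1/\sigma)}{k}\right)$ is close to $1$ once $k \gtrsim \ln(1/\sigma) \asymp \ln n$, so for the target $k \asymp n\epsilon^2$ we need $n\epsilon^2 \gtrsim \ln n$, which is mild; the genuine constraint is that $Ck\sigma^{2/k} \le \epsilon$ forces roughly $k\le \epsilon/(C\sigma^{2/k}) \approx \epsilon/C$ when $\sigma^{2/k}\approx 1$, whereas we wanted $k\asymp n\epsilon^2$; consistency of $n\epsilon^2 \lesssim \epsilon$, i.e. $\epsilon \lesssim 1/n$, is too strong, so instead one optimizes more carefully, keeping $\sigma^{2/k}$ slightly above $1$: setting $k\asymp n\epsilon^2$ and noting $\sigma \asymp n^{-1/2}$ gives $Ck\sigma^{2/k} \asymp n\epsilon^2 \cdot n^{-1/(n\epsilon^2)}$, which is $\lesssim \epsilon$ precisely in the regime $\epsilon = O(n^{-1/3})$ (so that $n\epsilon^2 = O(n^{1/3})$ while $\epsilon^{-1} = \Omega(n^{1/3})$, leaving enough room for the sub-polynomial correction). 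I would carry out this optimization explicitly, verify the constraint, and conclude $\Pr[|M-\mathbf{E}M|>\epsilon] \le \mathrm{e}^{-\Omega(n\epsilon^2)}$ in that range.
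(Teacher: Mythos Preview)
Your overall architecture---apply the asymmetric Rosenthal inequality to the centered dominating sum and then run Markov with $k\asymp n\epsilon^2$---is exactly the paper's. The gap is in your estimate of the second Rosenthal term. Bounding $\sum_i \mathbf{E}|X_i-\mathbf{E}X_i|^k \le \sum_i p_i^k q_i \le \sigma^2$ is valid but throws away all $k$-dependence, so $\bigl(\sum_i \mathbf{E}|X_i-\mathbf{E}X_i|^k\bigr)^{1/k}\le \sigma^{2/k}\to 1$ and the second term becomes essentially $Ck$. Your final paragraph then asserts that $Ck\,\sigma^{2/k}\asymp n\epsilon^2\cdot n^{-1/(n\epsilon^2)}\lesssim \epsilon$ when $\epsilon=O(n^{-1/3})$, but this is false: with $\epsilon\asymp n^{-1/3}$ one has $k\asymp n^{1/3}$ and $n^{-1/k}=\exp(-n^{-1/3}\log n)\to 1$, so the product is $\asymp n^{1/3}$, not $\lesssim n^{-1/3}$. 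The ``sub-polynomial correction'' does nothing here; you would need $\epsilon\lesssim 1/n$, which you already noted is too restrictive.

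The missing idea is a bound on $\sum_i p_i^k q_i$ that \emph{improves} with $k$. The paper uses $q_i\le e^{-np_i}$ and the one-variable maximum $\max_{p\ge 0} p^{k-1}e^{-np}=((k-1)/(en))^{k-1}$ to get
\[
\sum_i p_i^k e^{-np_i}\le \Bigl(\tfrac{k}{en}\Bigr)^{k-1}\sum_i p_i = \Bigl(\tfrac{k}{en}\Bigr)^{k-1},
\]
so that $\bigl(\sum_i \mathbf{E}|X_i-\mathbf{E}X_i|^k\bigr)^{1/k}=O(k\,n^{-1+1/k})$ and the second Rosenthal term is $O(k^2 n^{-1+1/k})$. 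Comparing this with the first term $O(k^{1/2}n^{-1/2})$ amounts to $k^{3/2}n^{1/k}=O(n^{1/2})$, which holds for all $k=O(n^{1/3})$; taking $k\asymp n\epsilon^2$ then yields the claimed tail for $\epsilon=O(n^{-1/3})$. Replacing your crude $\le\sigma^2$ step with this pointwise maximization fixes the argument.
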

Note that this result already has practical applications because the standard deviation of $M$ can be as big as $O(n^{-1/2})$ which is much smaller than the covered range $\epsilon=O(n^{-1/3})$.

From our key technical result, \Cref{main:lemma}, we easily derive missing mass concentration, improving upon best known bounds~\cite{ben2017concentration}. Our derivation is arguably simpler: we just compute the right-hand side in \Cref{main:lemma}. Also both tails are handled in a single bound, as opposed to previous works. 

From now on denote $p_i = \Pr[Y=i]$ and $q_i = \Pr[i\not\in S] = (1-p_i)^n$, then $X_i  \overset{d}{=} p_i\mathrm{Bern}(q_i)$.
We show gaussian-like behaviors for tails with \emph{variance proxy} which is a weighted combination of variances of terms appearing in \Cref{eq:domination}.
\begin{theorem}[Concentration of Missing Mass]\label{cor:main}
Let $\phi = x^{-2}(\exp(x)-1-x)$, fix a real number $t$ and define the weights
\begin{align}
\theta_i = q_i \phi(t p_i q_i) + (1-q_i)\phi(t p_i (1-q_i)).
\end{align}
and the variance proxy
\begin{align}
\sigma^2 = 2\sum_i \theta_i \mathbf{Var}[X_i].
\end{align}
Then the moment generating function (MGF) of the missing mass is bounded by
\begin{align}
\mathbf{E}\exp(t(M-\mathbf{E}M)) \leqslant \left(1+\frac{t^2\sigma^2}{2m} \right)^m \leqslant \exp\left(\frac{t^2\sigma^2}{2}\right).
\end{align}
\end{theorem}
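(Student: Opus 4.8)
The plan is to feed the two-point Bernoulli summands of \Cref{eq:domination} into the abstract bound \Cref{main:lemma} and then merely evaluate the resulting expression; beyond the domination no new idea is needed. First I would reduce to an independent sum. Since $\mathbf{E}M=\sum_i p_iq_i=\mathbf{E}M'$ and, by \Cref{eq:domination}, $\mathbf{E}\exp(tM)\leqslant\mathbf{E}\exp(tM')$ (this holds for \emph{every} real $t$, not just $t\geqslant 0$: each factor $\exp(tp_i\mathbb{I}(i\notin S_n))$ is a monotone function of the occurrence count of $i$, with the same direction of monotonicity across all $i$, so negative association still gives $\mathbf{E}\prod_i(\cdot)\leqslant\prod_i\mathbf{E}(\cdot)$), multiplying by the positive constant $\exp(-t\mathbf{E}M)$ yields
\[
\mathbf{E}\exp(t(M-\mathbf{E}M))\ \leqslant\ \mathbf{E}\exp(t(M'-\mathbf{E}M'))\ =\ \prod_{i}\mathbf{E}\exp(t\tilde X_i),\qquad \tilde X_i:=X_i-p_iq_i,
\]
with the $\tilde X_i$ independent and zero-centered.

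Next I would apply \Cref{main:lemma} to the $\tilde X_i$: it bounds the left-hand side by $\bigl(1+\tfrac1m\sum_{k\geqslant 2}\tfrac{t^k}{k!}\sum_i\mathbf{E}\tilde X_i^k\bigr)^m$. Because $\mathbf{E}\tilde X_i=0$, the tail of the exponential series collapses,
\[
\sum_{k\geqslant 2}\frac{t^k}{k!}\,\mathbf{E}\tilde X_i^k=\mathbf{E}\bigl[e^{t\tilde X_i}-1-t\tilde X_i\bigr]=\mathbf{E}e^{t\tilde X_i}-1,
\]
so the bound becomes $\bigl(1+\tfrac1m\sum_i(\mathbf{E}e^{t\tilde X_i}-1)\bigr)^m$ (exactly the form produced by the AM--GM step behind \Cref{main:lemma}), and it remains to show $\sum_i(\mathbf{E}e^{t\tilde X_i}-1)\leqslant \tfrac{t^2\sigma^2}{2}=t^2\sum_i\theta_i\mathbf{Var}[X_i]$, after which $(1+x/m)^m\leqslant e^x$ closes the proof.

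For the per-term estimate, $\tilde X_i$ takes the value $p_i(1-q_i)$ with probability $q_i$ and the value $-p_iq_i$ with probability $1-q_i$. Writing $e^x=1+x+x^2\phi(x)$ and using $\mathbf{E}\tilde X_i=0$,
\[
\mathbf{E}e^{t\tilde X_i}-1=\mathbf{E}\bigl[(t\tilde X_i)^2\phi(t\tilde X_i)\bigr]
=t^2\bigl(q_i p_i^2(1-q_i)^2\,\phi(tp_i(1-q_i))+(1-q_i)p_i^2q_i^2\,\phi(-tp_iq_i)\bigr),
\]
and pulling out the common factor $p_i^2q_i(1-q_i)=\mathbf{Var}[X_i]$ leaves the bracket $(1-q_i)\phi(tp_i(1-q_i))+q_i\phi(-tp_iq_i)$. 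For $t\geqslant 0$ this is at most $\theta_i$ because $\phi$ is increasing, so $\phi(-tp_iq_i)\leqslant\phi(tp_iq_i)$; for general $t$ one first invokes the elementary inequality $e^x-1-x\leqslant e^{|x|}-1-|x|$, i.e.\ $\phi(x)\leqslant\phi(|x|)$, which puts the arguments into the form $|t|p_i(1-q_i)$, $|t|p_iq_i$ intended in the definition of $\theta_i$. Summing over $i$ gives $\sum_i(\mathbf{E}e^{t\tilde X_i}-1)\leqslant t^2\sum_i\theta_i\mathbf{Var}[X_i]=\tfrac12 t^2\sigma^2$, hence $\mathbf{E}\exp(t(M-\mathbf{E}M))\leqslant(1+\tfrac{t^2\sigma^2}{2m})^m\leqslant\exp(\tfrac{t^2\sigma^2}{2})$.

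I expect the only delicate part to be this last computation — not because it is hard, but because of sign bookkeeping: one must track which Bernoulli outcome carries which probability and sign so that the two contributions recombine against a single copy of $\mathbf{Var}[X_i]$, and, if one wants the clean two-sided statement in one shot, handle $\phi$ at negative arguments via $\phi(x)\leqslant\phi(|x|)$. Everything else is a direct substitution into \Cref{main:lemma} together with $(1+x/m)^m\leqslant e^x$.
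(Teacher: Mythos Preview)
Your proposal is correct and follows essentially the same route as the paper: reduce to the independent sum via \Cref{eq:domination}, feed the centered two-point summands $\tilde X_i$ into \Cref{main:lemma}, and compute $\sum_{k\geqslant 2}\tfrac{t^k}{k!}\mathbf{E}\tilde X_i^k$ explicitly so that the factor $p_i^2q_i(1-q_i)=\mathbf{Var}[X_i]$ drops out and the bracket becomes $\theta_i$. The only difference is cosmetic: the paper computes with the absolute moments $\mathbf{E}|\tilde X_i|^k$ (which for this Bernoulli law land directly on $\theta_i$ as stated), while you keep the signed moments and then use $\phi(-x)\leqslant\phi(x)$ --- a slightly more careful treatment of exactly the sign bookkeeping you flagged as the delicate point.
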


\begin{remark}[Bounds are superior wrt \Cref{eq:gaussian_tails}]
We always have $\mathbf{Var}[X_i] \leqslant p_i^2 q_i \leqslant p_i^2\mathrm{e}^{-np_i}$ and $\theta_i\leqslant \phi(|t| p_i)$ because $\phi$ is monotone, and also $\phi(|t| p_i)\leqslant \mathrm{e}^{|t| p_i}$.
Under the extra assumption $|t| \leqslant n/2$ we thus obtain $\theta_i\mathbf{Var}[X_i] \leqslant p_i^2\mathrm{e}^{-n p_i /2}$ and then $\sigma^2\leqslant \sum_i p_i^2\mathrm{e}^{-n p_i /2} = O(1/n)$ (see ~\cite{ortiz2003concentration}). 
This implies the bound of $\exp(O(t^2/n))$ when $|t|\leqslant n/2$ for the moment generating function and the 
 tail bound of $\mathrm{e}^{-\Omega(n\epsilon^2)}$ for $|\epsilon| \leqslant O(1)$ and thus for all $\epsilon$ (for values $|\epsilon| > 1$ the tail is zero because the missing mass is not bigger than 1).
\end{remark}
It is easy to bound the weights for the case of lower tails, since $\phi(x) \leqslant \frac{1}{2}$ for negative $x$.
\begin{corollary}[Simpler Bound on Lower tails]
For any $t<0$ the variance proxy is bounded by 
\begin{align}
\sigma^2\leqslant \sum_i\mathbf{Var}[X_i]
\end{align}
Therefore for $v_{-}^2 = \sum_i\mathbf{Var}[X_i]$ we have (see Prop 3.7 in~\cite{ben2017concentration})
\begin{align}
\mathbf{E}\exp(t(M-\mathbf{E}M)) \leqslant \exp(t^2v_{-}^2/2),\quad t<0.
\end{align}
By Chernoff's inequality we get the following bounds
\begin{align}
\Pr[M-\mathbf{E}M <-\epsilon]\leqslant  \exp(-\epsilon^2/2v_{-}^2),\quad \epsilon > 0.
\end{align}
\end{corollary}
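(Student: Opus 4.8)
The plan is to combine the MGF bound of \Cref{cor:main} with a pointwise estimate on the weights $\theta_i$ that holds whenever $t<0$, and then close with a standard Chernoff optimization. First I would record the elementary fact that $\phi(x)\leqslant \tfrac12$ for every $x\leqslant 0$ (with the usual convention $\phi(0)=\tfrac12$). Writing $\phi(x)=x^{-2}(\mathrm{e}^x-1-x)$, this is equivalent to $\mathrm{e}^x-1-x\leqslant x^2/2$ on $(-\infty,0]$, which follows by a one-line convexity argument: the function $f(x)=x^2/2-\mathrm{e}^x+1+x$ satisfies $f(0)=f'(0)=0$ and $f''(x)=1-\mathrm{e}^x>0$ for $x<0$, so $f'$ is negative on $(-\infty,0)$ and hence $f$ is positive there.

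Next, for $t<0$ both arguments $t p_i q_i$ and $t p_i(1-q_i)$ occurring in the definition of $\theta_i$ are nonpositive, since $p_i\geqslant 0$ and $q_i\in[0,1]$; the previous step then gives $\phi(tp_iq_i)\leqslant\tfrac12$ and $\phi(tp_i(1-q_i))\leqslant\tfrac12$. Substituting into $\theta_i=q_i\phi(tp_iq_i)+(1-q_i)\phi(tp_i(1-q_i))$ and using $q_i+(1-q_i)=1$ yields $\theta_i\leqslant\tfrac12$ for every $i$. Therefore
\[
\sigma^2=2\sum_i\theta_i\mathbf{Var}[X_i]\leqslant \sum_i\mathbf{Var}[X_i]=v_{-}^2,
\]
and plugging this into the second inequality of \Cref{cor:main} gives $\mathbf{E}\exp(t(M-\mathbf{E}M))\leqslant\exp(t^2v_{-}^2/2)$ for all $t<0$, which is the first displayed claim of the corollary.

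Finally I would apply Chernoff's method to the lower tail: for $\epsilon>0$ and any $t<0$,
\[
\Pr[M-\mathbf{E}M<-\epsilon]=\Pr\!\left[\mathrm{e}^{t(M-\mathbf{E}M)}>\mathrm{e}^{-t\epsilon}\right]\leqslant \mathrm{e}^{t\epsilon}\,\mathbf{E}\,\mathrm{e}^{t(M-\mathbf{E}M)}\leqslant \exp\!\left(t\epsilon+\tfrac12 t^2v_{-}^2\right),
\]
and optimizing the right-hand side at $t=-\epsilon/v_{-}^2<0$ produces the stated bound $\exp(-\epsilon^2/2v_{-}^2)$. There is no serious obstacle here; the only point requiring a little care is the sign bookkeeping — namely that $t<0$ simultaneously makes the $\phi$-arguments nonpositive (so the weight bound $\theta_i\leqslant\tfrac12$ applies) and makes the Chernoff minimizer $t=-\epsilon/v_{-}^2$ admissible — together with the elementary inequality $\mathrm{e}^x-1-x\leqslant x^2/2$ on the negative axis.
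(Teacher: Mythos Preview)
Your proof is correct and follows exactly the approach the paper indicates: the paper's argument is the single sentence ``It is easy to bound the weights for the case of lower tails, since $\phi(x)\leqslant\frac{1}{2}$ for negative $x$,'' and you have simply filled in the details (verifying $\phi(x)\leqslant\tfrac12$ on $(-\infty,0]$, checking the signs of the arguments in $\theta_i$, and carrying out the Chernoff optimization). There is nothing to add.
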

\begin{remark}[Lower tails are "sharp"~\cite{ben2017concentration}]
This lower tail is considered sharp~\cite{ben2017concentration}, up to a constant in the exponent. 
This is because for the missing mass problem it is unlikely that negative association concentrate much better than the IID case, and for the IID case the sum cannot concentrate better as gaussian tails (due to Central Limit Theorem).
\end{remark}

Finding simple bounds for the weights in case of upper tails is typically more complex. We show one technique, \emph{poissonization}, which was also used to bound MGFs in~\cite{ben2017concentration}.
\begin{proposition}[Bounding weights by Poisson occupancy numbers]\label{eq:remark2}
For $i=1,2,\ldots$ consider Poisson distributions $P_i$ with expectations $n p_i$ (expected occurrences of $i$ in the sample). Let $K_r(n) = \sum_i \mathbb{I}(P_i = r)$ (counts the number of distributions that output $r$). We then have 
\begin{align}
\mathbf{E} K_r(n) = \sum_{i} \mathrm{e}^{-n p_i}\frac{(np_i)^r}{r!}
\end{align}
and the variance proxy is bounded by 
\begin{align}
\sigma^2\leqslant  2t^{-2}\sum_{r\geqslant 2}\left(\frac{t}{n}\right)^{r} \mathbf{E}K_r(n).
\end{align}
\end{proposition}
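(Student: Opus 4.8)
The plan is to reduce the Proposition to a single per-coordinate comparison. The identity for $\mathbf{E}K_r(n)$ is immediate and comes first: by linearity of expectation $\mathbf{E}K_r(n)=\sum_i\Pr[P_i=r]$, and $\Pr[P_i=r]=\mathrm{e}^{-np_i}(np_i)^r/r!$ by the definition of the Poisson law with mean $np_i$. All the content is therefore in the bound on the variance proxy $\sigma^2$.

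First I would rewrite the right-hand side of the claimed bound. Plugging in the formula for $\mathbf{E}K_r(n)$ and exchanging the order of summation (a routine rearrangement),
\[
2t^{-2}\sum_{r\geqslant 2}\left(\frac{t}{n}\right)^{r}\mathbf{E}K_r(n)=2t^{-2}\sum_i\mathrm{e}^{-np_i}\sum_{r\geqslant 2}\frac{(tp_i)^r}{r!}=2\sum_i\mathrm{e}^{-np_i}\,p_i^2\,\phi(tp_i),
\]
where the last equality is just the identity $\mathrm{e}^{x}-1-x=x^2\phi(x)$ (the definition of $\phi$) applied with $x=tp_i$. Since $\sigma^2=2\sum_i\theta_i\mathbf{Var}[X_i]$ by \Cref{cor:main}, the Proposition reduces to the per-index estimate $\theta_i\,\mathbf{Var}[X_i]\leqslant\mathrm{e}^{-np_i}\,p_i^2\,\phi(tp_i)$.

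To establish this I would combine two elementary facts. From $X_i\overset{d}{=}p_i\mathrm{Bern}(q_i)$ we get $\mathbf{Var}[X_i]=p_i^2q_i(1-q_i)$, and from $q_i=(1-p_i)^n\leqslant\mathrm{e}^{-np_i}$ we get $0\leqslant q_i(1-q_i)\leqslant q_i\leqslant\mathrm{e}^{-np_i}$. For the weight I would invoke monotonicity of $\phi$: since $0\leqslant q_i,1-q_i\leqslant1$, for $t\geqslant0$ (the regime relevant here, since this bound targets the upper tail; the lower tail is already covered by the preceding corollary) both $tp_iq_i$ and $tp_i(1-q_i)$ lie in $[0,tp_i]$, whence $\phi(tp_iq_i),\phi(tp_i(1-q_i))\leqslant\phi(tp_i)$ and therefore $\theta_i=q_i\phi(tp_iq_i)+(1-q_i)\phi(tp_i(1-q_i))\leqslant\phi(tp_i)$. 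Multiplying the nonnegative bounds $0\leqslant\theta_i\leqslant\phi(tp_i)$ and $0\leqslant q_i(1-q_i)\leqslant\mathrm{e}^{-np_i}$, and then by $p_i^2$, yields the per-index estimate; summing over $i$ and multiplying by $2$ completes the proof.

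The whole argument is short once \Cref{cor:main} is in hand; the only points worth a line of justification are the rearrangement of the double sum and the direction in which $\phi$ is applied monotonically — the latter being the reason the clean statement is phrased for $t\geqslant0$. I do not expect any genuine obstacle.
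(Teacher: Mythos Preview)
Your proof is correct and follows essentially the same route as the paper: bound $\mathbf{Var}[X_i]\leqslant p_i^2q_i\leqslant p_i^2\mathrm{e}^{-np_i}$, bound $\theta_i\leqslant\phi(tp_i)$ via monotonicity of $\phi$, and then expand $\phi$ as a power series to recognize the Poisson occupancy expectations. Your presentation is in fact slightly more careful than the paper's, since you make explicit both the one-line derivation of $\mathbf{E}K_r(n)$ and the restriction $t\geqslant 0$ needed for the monotonicity step (the paper leaves the latter implicit here, though it notes $\theta_i\leqslant\phi(|t|p_i)$ elsewhere).
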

Since the right-hand side as defined above is increasing in $t$ for $t>0$ we obtain a simpler version
\begin{corollary}[Simpler Bound on Upper tails]
In particular for $v_{+}^2 = 2 n^{-2} \sum_{r\geqslant 2} \mathbf{E} K_r(n)$ one has
\begin{align}
\mathbf{E}\exp(t(M-\mathbf{E}M)) \leqslant \exp(n^2v_{+}^2/2),\quad 0<t\leqslant n.
\end{align}
\end{corollary}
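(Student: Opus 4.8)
The plan is to obtain this directly from \Cref{eq:remark2} and \Cref{cor:main}, with only a one-line monotonicity argument in between. Recall that \Cref{eq:remark2} gives, for $t>0$, the bound $\sigma^2 \leqslant 2t^{-2}\sum_{r\geqslant 2}(t/n)^r\,\mathbf{E}K_r(n)$. First I would rewrite the generic summand as $t^{-2}(t/n)^r = t^{r-2}/n^r$ and note that, since every index in the sum satisfies $r\geqslant 2$, the factor $t^{r-2}$ is non-decreasing in $t$ on $(0,\infty)$ (it is constant when $r=2$ and strictly increasing when $r>2$). Hence the entire right-hand side is non-decreasing in $t>0$.

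Next I would evaluate this upper bound at the right endpoint $t=n$ of the admissible range. For any $0<t\leqslant n$, monotonicity yields
$$
\sigma^2 \leqslant 2t^{-2}\sum_{r\geqslant 2}\left(\frac{t}{n}\right)^{r}\mathbf{E}K_r(n) \leqslant 2n^{-2}\sum_{r\geqslant 2}\mathbf{E}K_r(n) = v_{+}^2 ,
$$
which is precisely the stated definition of $v_+^2$; in particular this upper bound on $\sigma^2$ no longer depends on $t$.

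Finally I would substitute this into the MGF estimate of \Cref{cor:main}: for $0<t\leqslant n$,
$$
\mathbf{E}\exp(t(M-\mathbf{E}M)) \leqslant \exp\left(\frac{t^2\sigma^2}{2}\right) \leqslant \exp\left(\frac{t^2 v_+^2}{2}\right) \leqslant \exp\left(\frac{n^2 v_+^2}{2}\right),
$$
where the last inequality just uses $t\leqslant n$ together with $v_+^2\geqslant 0$. This is exactly the claimed bound.

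I do not expect a genuine obstacle here: the argument is a termwise monotonicity check followed by two substitutions. The only points requiring a little care are that the monotonicity in $t$ must be verified \emph{term by term} and crucially relies on the absence of terms with $r<2$, and that the final passage from $\exp(t^2 v_+^2/2)$ to the $t$-free quantity $\exp(n^2 v_+^2/2)$ is deliberately lossy — if a concrete tail bound rather than a uniform MGF bound is wanted, one would instead keep the sharper $t$-dependent form $\exp(t^2 v_+^2/2)$ and optimize over $t\in(0,n]$.
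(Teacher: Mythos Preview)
Your argument is correct and is exactly the approach the paper has in mind: the paper's only justification is the one-line remark ``Since the right-hand side as defined above is increasing in $t$ for $t>0$ we obtain a simpler version,'' and you have simply spelled out that monotonicity termwise, evaluated at $t=n$, and substituted into \Cref{cor:main}. Your observation that the intermediate bound $\exp(t^2 v_+^2/2)$ is already available (and is what makes the comparison in the subsequent Remark meaningful) is a useful addition.
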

Tail bounds can be obtained by Chernoff's inequality.
\begin{remark}[Comparison with ~\cite{ben2017concentration}]
We compare this with Theorem 3.9 in ~\cite{ben2017concentration}. Their result has the right-hand side of $\exp\left(\frac{t^2v_{+}^2}{2(1-t/n)}\right)$ with the same same $v_{+}$, thus our exponent is strictly better (their exponent becomes unbounded for $t$ close to $n$). \end{remark}
As a side note we point out that the upper tail in ~\cite{ben2017concentration} is missing some assumptions, for example $t > n$ is allowed but gives the MGF smaller than 1, which contradicts Jensen's inequality (centered r.vs. have MGF at least 1).

Below we point out limitations of poissonized bounds.
\begin{remark}[Limitation of Poissonization]
The heart of poissonization is the approximation 
$$\mathbf{Var}[Y_i]\approx p_i^2\mathrm{e}^{-np_i} = n^{-2}\Pr[\mathrm{Poiss}(\lambda=n p_i)=2]$$ 
which overestimates when $p_i = o(n^{-1})$. This becomes clear when we study the birthday paradox setting.
Even when $p_i  = \Omega(n^{-1})$ the approximation may still loose a constant factor which has visible impacts on confidence intervals. This becomes apparent in our numerical experiments.
\end{remark}


\subsubsection{Application: Flexible Bernstein's Inequality}

Our bound in \Cref{main:lemma} implies the \emph{Flexible Bernstein Inequality} considered in~\cite{boucheron2013concentration}. 
In vast majority of cases Bernstein's inequality is stated for bounded random variables,
but this variant is \emph{much more powerful}, as it requires only an appropriate control of moments and handles terms of different magnitude. 
This is important in the context of difficulties in missing mass estimation highlighted by previous authors, and for many other settings.
\begin{corollary}[Flexible Bernstein's Inequality]\label{cor:flexible}
Let $X_i$ be independent and such that 
\begin{align}
\sum_i \mathbf{E}(X_i^{+})^{k}\leqslant \frac{k! v^{2} b^{k-2}}{2}
\end{align}
for some positive parameters $b,v$. Let $S = \sum_i X_i$. Then we have
\begin{align}
\mathbf{E}\exp(tS) \leqslant \exp\left(\frac{v^2t^2}{1-tb} \right),\quad |t| < b^{-1}
\end{align}
which implies the tail bound
\begin{align}
\Pr[S > \epsilon] \leqslant \mathrm{e}^{-\Omega\left(\frac{\epsilon^2}{v^2 + b\epsilon}\right)}.
\end{align}
\end{corollary}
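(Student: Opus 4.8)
The plan is to feed the moment hypothesis directly into \Cref{main:lemma}. First I would note that \Cref{main:lemma} is stated for zero-centered $X_i$, so I would apply it not to the $X_i$ themselves but observe that for $t>0$ only the positive part matters in the upper-tail direction; more precisely, since $x\mapsto \mathrm{e}^{tx}$ is convex and we only need an upper bound on $\mathbf{E}\exp(tS)$, I would bound $\mathbf{E}X_i^k$ appearing in the theorem by $\mathbf{E}(X_i^{+})^k$ for the even/relevant terms, being a touch careful that odd negative contributions only help. (If cleaner, I would instead first center and use that $\mathbf{E}(X_i-\mathbf{E}X_i)^k$ is controlled by the same kind of moment bound up to adjusting constants $b,v$; either route works, but the $X^{+}$ route matches the hypothesis verbatim.) Then \Cref{main:lemma} gives
\[
\mathbf{E}\exp(tS)\leqslant\left(1+\frac1m\sum_{k\geqslant 2}\frac{t^k}{k!}\sum_i\mathbf{E}(X_i^{+})^k\right)^{m}\leqslant\exp\left(\sum_{k\geqslant 2}\frac{t^k}{k!}\sum_i\mathbf{E}(X_i^{+})^k\right),
\]
using $(1+x/m)^m\leqslant\mathrm{e}^{x}$.

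Next I would substitute the hypothesis $\sum_i\mathbf{E}(X_i^{+})^k\leqslant \tfrac12 k!\,v^2 b^{k-2}$ into the exponent. The $k!$ cancels, leaving a geometric series:
\[
\sum_{k\geqslant 2}\frac{t^k}{k!}\cdot\frac{k!\,v^2 b^{k-2}}{2}=\frac{v^2 t^2}{2}\sum_{k\geqslant 2}(tb)^{k-2}=\frac{v^2 t^2}{2}\cdot\frac{1}{1-tb}
\]
for $0\leqslant t<b^{-1}$. This yields $\mathbf{E}\exp(tS)\leqslant\exp\!\big(\tfrac{v^2t^2}{2(1-tb)}\big)$; I would absorb the factor $2$ into the stated form $\exp(v^2t^2/(1-tb))$ (or simply keep the $\tfrac12$ — the paper's display omits it, so I would either match it by rescaling $v$ or note the constant is immaterial). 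For $t<0$ the same computation with $|t|$ in place of $t$, together with the trivial sign observation that negative $t$ only makes $\mathbf{E}(X_i^{+})^k t^k$ terms alternate favorably, gives the two-sided range $|t|<b^{-1}$.

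Finally, the tail bound follows from Chernoff's inequality: $\Pr[S>\epsilon]\leqslant\exp(-t\epsilon+v^2t^2/(1-tb))$, and optimizing over $t\in(0,b^{-1})$ — taking $t$ proportional to $\epsilon/(v^2+b\epsilon)$, the standard Bernstein choice — yields $\exp(-\Omega(\epsilon^2/(v^2+b\epsilon)))$. I do not expect any serious obstacle here; the only delicate point is the bookkeeping around centering versus using positive parts in \Cref{main:lemma}, since the theorem's hypothesis is phrased with $X_i^{+}$ while the theorem statement assumes zero mean. I would handle that by the convexity/monotonicity remark that replacing $X_i$ by a variable with the same positive-part moments and a point mass adjustment at a suitable negative value can only increase $\mathbf{E}\exp(tS)$ for $t>0$, so the moment inequality in \Cref{main:lemma} applies with $\mathbf{E}X_i^k$ replaced by $\mathbf{E}(X_i^{+})^k$. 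Everything else is the routine geometric-series-and-Chernoff calculation.
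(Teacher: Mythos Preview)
Your proposal is essentially the paper's own proof: apply \Cref{main:lemma}, replace $\mathbf{E}X_i^k$ by $\mathbf{E}(X_i^{+})^k$, sum the resulting geometric series for $|tb|<1$, use $(1+x/m)^m\leqslant\mathrm{e}^{x}$, and finish with Chernoff. The paper simply asserts $\mathbf{E}X_i^{k}\leqslant\mathbf{E}(X_i^{+})^{k}$ and the inequality $(1+x/m)^m\leqslant\exp(x)$ without your additional commentary on centering or the factor $1/2$, but the route and computations are identical.
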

\begin{remark}[Exact constants]
The constants for the tail can be optimized as in ~\cite{boucheron2013concentration}.
\end{remark}
One can show that this inequality \emph{easily} implies \Cref{eq:gaussian_tails}!
Indeed, we consider centered random variables $X_i-\mathbf{E}X_i$ and
from our discussion on the application of Rosenthal's inequality we see that $b = O(1/n)$ and $v^2=O(1/n)$. For the missing mass $\epsilon \leqslant 1$, we obtain the tail $\mathrm{e}^{-\Omega(n\epsilon^2)}$.

\subsubsection{Application: Missing Mass in Birthday Paradox}

We revisit Example 2 from~\cite{ben2017concentration}, which discuss the missing mass problem in the \emph{birthday paradox} setting. We have $m$ elements uniformly distributed so that $p_i = \frac{1}{m}$ for $i=1,\ldots,m$, and $n=\Theta(m^{1/2})$.
The authors of ~\cite{ben2017concentration} show that they were not able to obtain matching (up to constants in exponents) bounds on the lower and upper tail, because missmatching variance proxies $v_{+}^2$ and $v_{-}^2$.

We solve this problem and obtain matching lower and upper tails. 

\begin{corollary}[Confidence for Missing Mass in Birthday Paradox]\label{cor:birthday}
For the setting as above let $\sigma^2 = \sum_{i=1}^{m}\mathbf{Var}[Y_i]$. 
Then we have $\sigma^2 = \Theta(n^{-3})$ and the upper tail is
\begin{align}
\Pr[ M-\mathbf{E}M>\epsilon] \leqslant \exp\left( -\Omega (\epsilon^2/\sigma^2) \right),\quad \epsilon > 0.
\end{align}
Therefore the upper and lower tail are both $\mathrm{e}^{-\Omega(n^3\epsilon)}$. In other words the confidence interval are
\begin{align}
M \in [\mathbf{E}M -\epsilon,\mathbf{E}M+\epsilon] \quad \text{w.p.}\quad 1-\mathrm{e}^{ -\Omega (n^3\epsilon^2) }.
\end{align}
\end{corollary}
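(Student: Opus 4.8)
The plan is to reduce the corollary to three ingredients: the order of magnitude of $\sigma^2=\sum_i\mathbf{Var}[X_i]$, the lower-tail bound already recorded above, and the upper tail obtained by specializing \Cref{cor:main} to the uniform law. For the orders, recall $p_i=1/m$ and $q_i=(1-1/m)^n$. Since $n=\Theta(m^{1/2})$ we have $\ln q_i=n\ln(1-1/m)=-\Theta(n/m)=-\Theta(m^{-1/2})\to0$, hence $q_i=1-\Theta(n^{-1})$; in particular $q_i=\Theta(1)$ and $1-q_i=\Theta(n^{-1})$, so $\mathbf{Var}[X_i]=p_i^2q_i(1-q_i)=\Theta(m^{-2}n^{-1})=\Theta(n^{-5})$ using $m=\Theta(n^2)$. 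Summing the $m=\Theta(n^2)$ identical terms gives $\sigma^2=\Theta(n^{-3})$; we will also use the exact identity $\sigma^2=\tfrac1m q_1(1-q_1)$.

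For the \textbf{lower tail} there is nothing to do: apply the lower-tail corollary above with $v_{-}^2=\sum_i\mathbf{Var}[X_i]=\sigma^2$ to obtain $\Pr[M-\mathbf{E}M<-\epsilon]\leqslant\exp(-\epsilon^2/2\sigma^2)=\exp(-\Omega(n^3\epsilon^2))$ for every $\epsilon>0$.

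For the \textbf{upper tail} I would use that all $p_i$ coincide, so all weights $\theta_i$ of \Cref{cor:main} equal a common value $\theta(t)=q_1\phi(tp_1q_1)+(1-q_1)\phi(tp_1(1-q_1))$, and the variance proxy of \Cref{cor:main} equals $2\theta(t)\sigma^2$. Restricting to $0<t\leqslant m$ forces $tp_1\leqslant1$, hence $tp_1q_1\leqslant1$ and $tp_1(1-q_1)\leqslant1$, so monotonicity of $\phi$ gives $\theta(t)\leqslant\phi(1)=\mathrm{e}-2<1$. Then \Cref{cor:main} yields $\mathbf{E}\exp(t(M-\mathbf{E}M))\leqslant\exp(t^2\sigma^2)$ on $0<t\leqslant m$, and Chernoff's inequality, optimized at $t=\epsilon/(2\sigma^2)$, gives $\Pr[M-\mathbf{E}M>\epsilon]\leqslant\exp(-\epsilon^2/4\sigma^2)$ as long as the optimizer is admissible, i.e. $\epsilon\leqslant2m\sigma^2$.

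The remaining (and only delicate) point is $\epsilon>2m\sigma^2$, where the Chernoff-optimal $t$ escapes the range on which $\theta(t)$ is controlled; here I would switch to the deterministic bound $M-\mathbf{E}M=\sum_i p_i(\mathbb{I}(i\notin S_n)-q_i)\leqslant\sum_i p_i(1-q_i)=1-\mathbf{E}M=1-q_1$ which holds surely. Since $2m\sigma^2=2q_1(1-q_1)\geqslant1-q_1$ once $q_1\geqslant\tfrac12$ (true for $m$ large, as $q_1\geqslant1-n/m\to1$), every $\epsilon>2m\sigma^2$ satisfies $\epsilon>1-q_1\geqslant M-\mathbf{E}M$ surely, so the upper tail is exactly $0$ there. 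Combining the two regimes, $\Pr[M-\mathbf{E}M>\epsilon]\leqslant\exp(-\epsilon^2/4\sigma^2)=\exp(-\Omega(n^3\epsilon^2))$ for all $\epsilon>0$; with the lower tail and a union bound this gives the stated confidence interval. I expect the matching of the Chernoff threshold $2m\sigma^2$ with the almost-sure size $1-q_1$ of the deviation — both $\Theta(n^{-1})$ in this regime — to be the one step requiring care; everything else is a single use of the monotonicity of $\phi$ plus bookkeeping.
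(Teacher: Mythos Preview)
Your proof is correct and follows essentially the same approach as the paper: both compute $\sigma^2=\Theta(n^{-3})$ via $q_i=1-\Theta(n^{-1})$, both bound the weights $\theta_i=O(1)$ by restricting to $t\leqslant m=\Theta(n^2)$ so that $tp_i=O(1)$, and both close the argument by noting that deviations beyond $1-\mathbf{E}M=1-q_1=\Theta(n^{-1})$ are impossible. The only cosmetic difference is the last step: the paper packages the constrained MGF bound as a sub-exponential/Bernstein tail $\exp\bigl(-c\epsilon^2/(\sigma^2+\epsilon/n^2)\bigr)$ and then simplifies using $\epsilon=O(1/n)$, whereas you optimize Chernoff directly and verify that the admissibility threshold $2m\sigma^2=2q_1(1-q_1)$ already exceeds the almost-sure bound $1-q_1$ once $q_1\geqslant\tfrac12$ --- but these are two phrasings of the same computation.
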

\begin{remark}[Variance proxy]
We can show that $\mathbf{Var}[M] =\Theta(\sigma^2)$, thus by the IID approximation we don't overestimate the variance by more than a constant factor!
\end{remark}


\begin{remark}[Super-linear exponent]
Note that the bound is better by a factor $\Omega(n^2)$ from the simple bound in~\Cref{eq:gaussian_tails}. This is a nice example showing that replacing $n$ by a distribution-dependent constant leads to huge improvements.
\end{remark}

\subsubsection{Numerical Evaluation}

To fairly compare with~\cite{ben2017concentration} we don't use their Theorem 3.9 but Proposition 3.8 which is bit stronger (and doesn't seem to suffer from issues with the range of $t$).
We find that our bounds give much better numerical estimates, likely due to constants in the exponents.

For starter we consider sampling from 4 elements with probabilities $p_1=0.1,p_2=0.2,p_3=0.3,p_4=0.4$, number of samples $n=10$
and $\epsilon=0.25$. The bound we obtain 
using \Cref{cor:main} is much better than when using poissonization, see~\Cref{fig:example}.
\begin{figure}[ht!]
\centering
\includegraphics[scale=0.5]{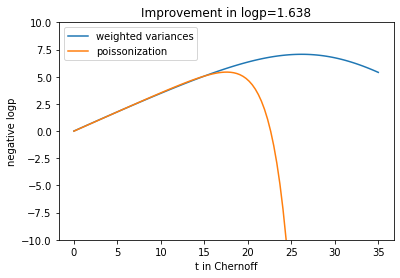}
\caption{Optimization of negative exponent in Chernoff's inequality using weighted variances (this paper) and poissonization (\cite{ben2017concentration}). Our method improves confidence bounds by a factor of $5$.}
\label{fig:example}
\end{figure}

Then we move to a wider evaluation, summarized in \Cref{tab:summary}. We sampled weights $p_i$ at random following Dirichlet distribution with parameter $\alpha=\frac{1}{2}$ (which gives uninformative prior).
\begin{table}[ht!]
\centering
\begin{tabular}{|c|c|c|c|c|c|}
\hline
$m$ & $n$ & av. missing mass & $\epsilon$ & $\mathrm{logp}$ improvement & std error \\
\hline
\multirow{2}{*}{30} & \multirow{2}{*}{30} & \multirow{2}{*}{ 0.07 } &0.01 & 0.30 & 0.02 \\
\cline{4-6} 
& &  & 0.1 & 4.72 & 1.44 \\
\hline
\end{tabular}
\caption{Summary of empirical evaluation.}
\label{tab:summary}
\end{table}
We observe significant improvement, particularly for larger values of $\epsilon$. The code is made available on \url{https://github.com/maciejskorski/missing_mass}.




\section{Preliminaries}

The moment generating function of a random variable is $\mathrm{MGF}_X(t) = \mathbf{E}\mathrm{e}^{tX}$.
The standard tool for deriving concentration inequalities is Chernoff's inequality, which states that $\Pr[X>\epsilon] < \mathrm{e}^{-t\epsilon}\cdot\mathbf{E}\mathrm{e}^{tX}$  for any real $t$; the best bound is obtained by optimizing the choice of $t$.
The best choice of $t$ is equivalent to maximizing $t\epsilon-\log \mathrm{MGF}_X(t)$ over $t$, e.g. finding the Legendre's dual of $\log\mathrm{MGF}_X$.

\section{Proofs}



\subsection{Proof of \Cref{main:lemma}}

By independence we have
\begin{align}
\mathbf{E}\exp\left(t\sum_{i=1}^{n} X_i\right)  = \prod_{i=1}^{n}\mathbf{E}\exp(t X_i)
\end{align}
We write the right side as $\prod_i (1+z_i)$ where $z_i = \mathbf{E}\exp(t X_i) - 1$ and apply the AM-GM inequality
\begin{align}
\mathbf{E}\exp\left(t\sum_{i=1}^{n} X_i\right)  = \prod_{i=1}^{n}(1+z_i) \leqslant \left(1+n^{-1}\sum_{i=1}^{n} z_i\right)
\end{align}
(we have $z_i\geqslant 0$ by Jensen's inequality and zero-mean property). The result follows by utilizing the Taylor's expansion $\mathbf{E}\exp(t X_i)-1 = \sum_{k\geqslant 1}\frac{t^k}{k!}\mathbf{E}|X_i|^k$ and the fact that $X_i$ are centered, so that terms with $k=1$ vanish.

\subsection{Proof of \Cref{cor:main}}\label{sec:proof_thm_mass}

Recall that for the missing mass problem it suffices to derive concentration bounds for
\begin{align}
M = \sum_i X_i
\end{align}
with \emph{independent summands} distributed as
\begin{align}
    X_i \overset{d}{=} p_i\cdot \mathrm{Bern}(q_i),\quad q_i = (1-p_i)^n,\quad p_i = \Pr[Y=i].
\end{align}
We shall apply \Cref{main:lemma} with $X_i$ replaced by $X_i-\mathbf{E}X_i$ (so that they are centered as required).

By definition
\begin{align}
\mathbf{E}|X_i-\mathbf{E}X_i|^k = p_i^k \cdot\left(q_i(1-q_i)^k + (1-q_i) q_i^k \right) = q_i p_i^k(1-q_i)^k + (1-q_i)p_i^k q_i^k
\end{align}
Define $\phi(x) = x^{-2}(\exp(x)-1-x) $. Summing over $k$ in the equation above we obtain for every $i$
\begin{align}
\sum_{k\geqslant 2}\frac{t^k}{k!}\mathbf{E}|X_i-\mathbf{E}X_i|^k = q_i (t p_i (1-q_i))^2\phi(t p_i (1-q_i)) + (1-q_i)(t p_i q_i)^2\phi(t p_i q_i)
\end{align}
Since $\mathbf{Var}[Y_i] = p_i^2 q_i(1-q_i)$ we have
\begin{align}
\sum_{k\geqslant 2}\frac{t^k}{k!}\mathbf{E}|X_i-\mathbf{E}X_i|^k =t^2\cdot \mathbf{Var}[Y_i] \cdot (q_i \phi(t p_i q_i) + (1-q_i)\phi(t p_i (1-q_i))
\end{align}
The lemma now gives
\begin{align}
\mathbf{E}[\mathrm{e}^{t(M-\mathbf{E}M)}] \leqslant (1+t^2 \sigma^2/ n)^n \leqslant \mathrm{e}^{t^2\sigma^2}
\end{align}
where $\sigma^2 =  \sum_i (q_i \phi(t p_i q_i) + (1-q_i)\phi(t p_i (1-q_i))\mathbf{Var}[Y_i]$. This finishes the proof of the moment generating function bound.
\subsection{Proof of \Cref{cor:rosenthal}}

We derive a bound on the moments. 
\begin{claim}
For $k\geqslant 2$ we have
\begin{align}
(\sum_i \mathbf{E}|X_i-\mathbf{E}X_i|^k)^{1/k} = k\cdot O(n^{-1+1/k}) 
\end{align}
\end{claim}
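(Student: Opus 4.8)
The plan is to estimate $\sum_i \mathbf{E}|X_i - \mathbf{E}X_i|^k$ directly from the explicit formula already derived, namely
\begin{align}
\mathbf{E}|X_i-\mathbf{E}X_i|^k = q_i p_i^k(1-q_i)^k + (1-q_i)p_i^k q_i^k,
\end{align}
and then take the $k$-th root. First I would observe that both terms on the right are at most $p_i^k q_i(1-q_i) \le p_i^k q_i \le p_i^k \mathrm{e}^{-(n-1)p_i}$ — here we use $(1-q_i)^k \le 1-q_i$ and $q_i^k \le q_i$ for $k \ge 1$ together with $q_i = (1-p_i)^n \le \mathrm{e}^{-np_i}$ (and, if one wants to be careful near $p_i$ close to $1$, the cruder $q_i \le \mathrm{e}^{-(n-1)p_i}$ via $\log(1-p_i) \le -p_i$ suffices for all $p_i \in (0,1)$). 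Hence
\begin{align}
\sum_i \mathbf{E}|X_i-\mathbf{E}X_i|^k \leqslant 2\sum_i p_i^k \mathrm{e}^{-(n-1)p_i}.
\end{align}

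Next I would bound the elementary quantity $\sum_i p_i^k \mathrm{e}^{-(n-1)p_i}$. The standard trick (used already in~\cite{ortiz2003concentration}) is that $x \mapsto x^{k-1}\mathrm{e}^{-(n-1)x}$ is maximized at $x = \frac{k-1}{n-1}$, giving $x^{k-1}\mathrm{e}^{-(n-1)x} \le \left(\frac{k-1}{\mathrm{e}(n-1)}\right)^{k-1}$; pulling this factor out and using $\sum_i p_i = 1$ yields
\begin{align}
\sum_i p_i^k \mathrm{e}^{-(n-1)p_i} \leqslant \left(\frac{k-1}{\mathrm{e}(n-1)}\right)^{k-1}.
\end{align}
Therefore $\sum_i \mathbf{E}|X_i-\mathbf{E}X_i|^k \le 2\left(\frac{k-1}{\mathrm{e}(n-1)}\right)^{k-1}$, and taking $k$-th roots gives
\begin{align}
\left(\sum_i \mathbf{E}|X_i-\mathbf{E}X_i|^k\right)^{1/k} \leqslant 2^{1/k}\left(\frac{k-1}{\mathrm{e}(n-1)}\right)^{\frac{k-1}{k}} = O\!\left(k\cdot (n-1)^{-1+1/k}\right),
\end{align}
where the last step absorbs $2^{1/k} = O(1)$, notes $\left(\frac{k-1}{\mathrm{e}}\right)^{1-1/k} = O(k)$ (since $(k-1)^{1-1/k} \le k$ and $\mathrm{e}^{-(1-1/k)} = O(1)$), and replaces $n-1$ by $n$ at the cost of a constant (for $n \ge 2$). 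This matches the claimed $k \cdot O(n^{-1+1/k})$.

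The argument is essentially routine; the only place demanding a little care is the passage from the two-term bound to $p_i^k q_i$, where one must not lose a factor of $2$ sloppily (it is harmless here since it sits under a $k$-th root), and the optimization of $x^{k-1}\mathrm{e}^{-(n-1)x}$, which is a one-line calculus fact. I do not anticipate a genuine obstacle: the heavy lifting for \Cref{cor:rosenthal} happens afterwards, when this moment bound is fed into Rosenthal's inequality (\Cref{lemma:Rosenthal}) and one checks that the $\sum_i(\mathbf{E}X_i^2)^{1/2}$ term — which is $O(n^{-1/2})$ in this regime — dominates the $\left(\sum_i \mathbf{E}|X_i|^k\right)^{1/k}$ term precisely when $\epsilon = O(n^{-1/3})$, after optimizing $k \asymp \log(1/\text{tail})$ in the Markov/moment bound. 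But that is a separate step; the claim as stated is just the moment estimate above.
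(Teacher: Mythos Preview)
Your argument is correct and is essentially the same as the paper's: bound $\mathbf{E}|X_i-\mathbf{E}X_i|^k$ by $O(p_i^k\mathrm{e}^{-np_i})$, peel off one factor of $p_i$, maximize $p^{k-1}\mathrm{e}^{-np}$ pointwise, and use $\sum_i p_i=1$. The only cosmetic differences are that the paper reaches $p_i^k\mathrm{e}^{-np_i}$ in one line via $|X_i-\mathbf{E}X_i|\le p_i$ (so $\mathbf{E}|X_i-\mathbf{E}X_i|^k\le p_i^{k-2}\mathbf{Var}[X_i]$) rather than through the two-term formula, and your detour through $\mathrm{e}^{-(n-1)p_i}$ is unnecessary since $(1-p)^n\le\mathrm{e}^{-np}$ holds for all $p\in[0,1]$.
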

\begin{proof}[Proof of Claim]
Since $|X_i-\mathbf{E}X_i| \leqslant p_i$ we have $\mathbf{E}|X_i-\mathbf{E}X_i|^k \leqslant p_i^{k-2}\mathbf{Var}[X_i]$,
and thus 
\begin{align}
\mathbf{E}|X_i-\mathbf{E}X_i|^k \leqslant p_i^{k}\mathrm{e}^{-np_i}
\end{align}
Note that the function $p \to p^{k}\mathrm{e}^{-np}$ for $k,n>0$ is maximized for $p=k/n$, therefore we have
\begin{align}
\sum_i \mathbf{E}|X_i-\mathbf{E}X_i|^k \leqslant \sum_i p_i \cdot (k/n)^{k-1}\mathrm{e}^{-k}  \leqslant (k/\mathrm{e})^{k-1}\cdot n^{1-k}
\end{align} 
which gives the claimed bound.
\end{proof}
Now \Cref{lemma:Rosenthal} implies
\begin{align}
\left(\mathbf{E}\left|\sum_i (X_i-\mathbf{E}X_i)\right|^k\right)^{1/k} = O(k^{1/2} n^{-1/2}) +O(k^2 n^{-1+1/k}). 
\end{align}
Let's compare the two terms on the right-hand side. We have
$ k^2 n^{-1+1/k} = O(k^{1/2} n^{-1/2})$ if and only if $n^{1/k} k^{3/2} = O( n^{1/2})$. 
This is true as long $k=O(n^{1/3})$; this is because the derivative test shows that the left side is 
decreases when $k<2/3\cdot \log n$ and increases when $k>2/3\cdot \log n$ thus the maximum occurs at $k=2$ or $k=\Theta(n^{1/3})$.
Thus
\begin{align}
\left(\mathbf{E}\left|\sum_i (X_i-\mathbf{E}X_i)\right|^k\right)^{1/k} = O(k^{1/2} n^{-1/2}) ,\quad k = O(n^{1/3}).
\end{align}
By applying Markov's inequality we obtain the tail bound of $\exp(-\Omega(n\epsilon^2))$ provided that 
$n\epsilon^2 = O(n^{1/3})$ or $\epsilon = O(n^{-1/3})$. 
Here we formally need something stronger than stochastic domination of MGFs, namely the domination of moments which also follows by negative dependence.

\subsection{Proof of \Cref{eq:remark2}}

Note that $\mathbf{Var}[Y_i] \leqslant \mathbf{E}(Y_i)^2 = p_i^2q_i\leqslant p_i^2\mathrm{e}^{-np_i}$,
$\theta_i \leqslant \phi(t p_i)$ and $\phi(x) = \sum_{r\geqslant 2} \frac{x^{r-2}}{r!}$,
therefore
\begin{align}
\sigma^2=2\sum_i \theta_i \mathbf{Var}[Y_i] \leqslant 2 \sum_i p_i^2 \mathrm{e}^{-np_i} \sum_{r\geqslant 2} \frac{(t p_i )^{r-2}}{r!} = 2t^{-2}\sum_{r\geqslant 2}\left(\frac{t}{n}\right)^{r} \mathbf{E}K_r(n).
\end{align}

\subsection{Proof of \Cref{cor:birthday}}

We use the notation as in \Cref{sec:proof_thm_mass}, that is
we consider independent $X_i = p_i\mathrm{Bern}(q_i)$, where $p_i = 1/m$ and $q_i = (1-p_i)^n$.

\begin{claim}
We have $\mathbf{E}M= 1-\Theta(n^{-1})$.
\end{claim}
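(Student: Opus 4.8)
The plan is to evaluate $\mathbf{E}M$ in closed form and then read off the asymptotics. By linearity of expectation, $\mathbf{E}M = \sum_{i=1}^{m} p_i\Pr[i\notin S_n] = \sum_{i=1}^{m} p_i q_i$ with $q_i = (1-p_i)^n$; this holds regardless of the dependence structure, so it is the same for $M$ and for the dominating independent sum $M'$ of \Cref{eq:domination}. Substituting the uniform weights $p_i = 1/m$ collapses the sum to $\mathbf{E}M = (1-1/m)^{n}$, hence $1-\mathbf{E}M = 1-(1-1/m)^{n}$.

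Next I would sandwich $1-(1-1/m)^n$ between two constant multiples of $n/m$. For the upper bound, Bernoulli's inequality gives $(1-1/m)^n \ge 1 - n/m$, so $1-\mathbf{E}M \le n/m$. For the lower bound, $(1-1/m)^n \le \mathrm{e}^{-n/m}$, and since $n=\Theta(m^{1/2})$ we have $n/m = o(1)$, so using the elementary inequality $1-\mathrm{e}^{-x}\ge x/2$ valid for $x\in[0,1]$ we get $1-\mathbf{E}M \ge \tfrac12\cdot n/m$ for all sufficiently large $m$. Therefore $1-\mathbf{E}M = \Theta(n/m)$. Finally, $n = \Theta(m^{1/2})$ is equivalent to $m = \Theta(n^{2})$, whence $n/m = \Theta(n^{-1})$, and we conclude $\mathbf{E}M = 1-\Theta(n^{-1})$. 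As an alternative to the sandwiching one could take logarithms: $\log\mathbf{E}M = n\log(1-1/m) = -n/m + O(n/m^{2})$, and then expand $\mathrm{e}^{-n/m}$.

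There is no genuine obstacle in this claim — it is a one-line computation followed by elementary estimates. The only point deserving attention is the asymptotic bookkeeping: one must check that the hypothesis $n = \Theta(m^{1/2})$ forces $n/m \to 0$, so that both the Bernoulli lower bound and the exponential upper bound on $(1-1/m)^n$ are tight up to constants, and that the error terms (the $O(n/m^2)$ from the logarithm expansion, or equivalently the quadratic term in $\mathrm{e}^{-n/m}$) are of strictly smaller order than the main term $\Theta(n^{-1})$.
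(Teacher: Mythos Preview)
Your proof is correct and follows essentially the same approach as the paper: compute $\mathbf{E}M = (1-1/m)^n$ from linearity and then read off the asymptotics using $n/m = \Theta(n^{-1})$. The paper is terser, simply asserting that $n/m = \Theta(1/n)$ implies $\mathbf{E}M = 1 - \Theta(n^{-1})$, whereas you spell out the sandwiching via Bernoulli's inequality and the exponential bound; this extra care is fine but not a different route.
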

\begin{proof}[Proof of Claim]
We have $\mathbf{E}M= \sum_i\mathbf{E}X_i$ and $\mathbf{E}X_i = p_i(1-p_i)^{n}$. Since all $p_i$ are equal we get
$\mathbf{E}M = (1-1/m)^{n}$. Since $1/m\cdot n = \Theta(1/n)$ we have $\mathbf{E} M  = 1-\Theta(n^{-1})$.
\end{proof}
Since $M\leqslant 1$ it follows that we can restrict to $\epsilon = 1-M = O(1/n)$ when discussing tails (for bigger values of $\epsilon$ the true tail probability is zero so the bounds hold trivially).

\begin{claim}
We have $\mathbf{Var}[X_i]  = \Theta(n^{-5})$, and thus $\sum_i\mathbf{Var}[X_i] = \Theta(n^{-3})$.
\end{claim}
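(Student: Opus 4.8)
The plan is to evaluate $\mathbf{Var}[X_i]$ directly from the law $X_i\overset{d}{=}p_i\mathrm{Bern}(q_i)$, which gives the closed form $\mathbf{Var}[X_i]=p_i^2 q_i(1-q_i)$, and then to track the order of magnitude of each of the three factors under the birthday-paradox scaling $n=\Theta(m^{1/2})$, equivalently $m=\Theta(n^2)$. Since $p_i=1/m$, the first factor is immediate: $p_i^2=\Theta(n^{-4})$.

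The only step with any content is estimating $q_i=(1-1/m)^n$ and its complement $1-q_i$. The exponent $n/m=\Theta(n^{-1})$ tends to $0$, so for $m$ large enough that $1/m\leqslant 1/2$ I would sandwich $q_i$ using the elementary bounds $\mathrm{e}^{-2x}\leqslant 1-x\leqslant\mathrm{e}^{-x}$ valid on $[0,1/2]$, obtaining $\mathrm{e}^{-2n/m}\leqslant q_i\leqslant \mathrm{e}^{-n/m}$. Combining this with $x/2\leqslant 1-\mathrm{e}^{-x}\leqslant x$ for small $x\geqslant 0$ yields the two-sided estimate $1-q_i=\Theta(n/m)=\Theta(n^{-1})$, while at the same time $q_i\to 1$, so $q_i=\Theta(1)$.

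Multiplying the three orders gives $\mathbf{Var}[X_i]=p_i^2\cdot q_i\cdot(1-q_i)=\Theta(n^{-4})\cdot\Theta(1)\cdot\Theta(n^{-1})=\Theta(n^{-5})$, uniformly in $i$, and summing the $m=\Theta(n^2)$ identical terms gives $\sum_i\mathbf{Var}[X_i]=\Theta(n^{-3})$. There is no genuine obstacle here; the one place to be slightly careful is keeping the estimate of $1-q_i$ honestly two-sided --- the matching \emph{lower} bound $q_i\leqslant\mathrm{e}^{-n/m}$ together with $1-\mathrm{e}^{-x}\geqslant x/2$ is exactly what prevents the variance from being merely $O(n^{-5})$ instead of $\Theta(n^{-5})$ --- and everything else is bookkeeping of absolute constants.
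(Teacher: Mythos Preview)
Your proof is correct and follows essentially the same route as the paper: both compute $\mathbf{Var}[X_i]=p_i^2 q_i(1-q_i)$, observe that $p_i n=\Theta(n^{-1})$ forces $q_i=1-\Theta(n^{-1})$, and multiply out to get $\Theta(m^{-2}n^{-1})=\Theta(n^{-5})$. You are simply more explicit than the paper about the two-sided sandwich for $1-q_i$, which is a virtue, not a difference in strategy.
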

\begin{proof}[Proof of Claim]
By definition $\mathbf{Var}[X_i] = p_i^2 q_i(1-q_i)$. Since $q_i = (1-p_i)^n$ and since $p_i\cdot n = \Theta(1/n)$ we have
$q_i = 1-\Theta(1/ n)$, therefore $\mathbf{Var}[X_i] = \Theta(m^{-2}n^{-1})$.
\end{proof}

Consider now the weights $\theta_i$, we have
\begin{claim}
For any $t = O(n^2)$ we have $\theta_i = \Theta(1)$.
\end{claim}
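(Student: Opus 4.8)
The plan is to evaluate the two building blocks of $\theta_i$ in the birthday-paradox regime and show each is $\Theta(1)$. Recall that $\theta_i = q_i\phi(t p_i q_i) + (1-q_i)\phi(t p_i(1-q_i))$, where $\phi(x) = x^{-2}(\mathrm{e}^x - 1 - x)$. First I would record the elementary facts about $\phi$: it is positive, strictly increasing, $\phi(0) = \tfrac12$ (by Taylor expansion $\phi(x) = \sum_{r\geqslant 2} x^{r-2}/r!$), and $\phi(x)\leqslant \tfrac12$ for $x\leqslant 0$, while $\phi(x)\leqslant \mathrm{e}^x$ for all $x$. These give $\phi(x) = \Theta(1)$ on any bounded interval of arguments, and in particular on $[-c, c]$ for fixed $c$ we have $\phi(x)\in[\tfrac12, \mathrm{e}^c]$.

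Next I would plug in the birthday-paradox values. From the earlier claims in this proof, $p_i = 1/m$ with $m = \Theta(n^2)$, hence $p_i = \Theta(n^{-2})$; moreover $n p_i = \Theta(n^{-1}) = o(1)$, so $q_i = (1-p_i)^n = 1 - \Theta(n^{-1})$ and thus $q_i = \Theta(1)$ and $1 - q_i = \Theta(n^{-1})$. Now consider the arguments of $\phi$ inside $\theta_i$. With $|t| = O(n^2)$ we get $|t| p_i q_i = O(n^2)\cdot\Theta(n^{-2})\cdot\Theta(1) = O(1)$, and similarly $|t| p_i (1-q_i) = O(n^2)\cdot\Theta(n^{-2})\cdot\Theta(n^{-1}) = O(n^{-1}) = o(1)$. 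Since both arguments are bounded, monotonicity of $\phi$ and $\phi(0)=\tfrac12$ give $\phi(t p_i q_i) = \Theta(1)$ and $\phi(t p_i(1-q_i)) = \Theta(1)$ (the latter is in fact $\tfrac12 + o(1)$).

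Finally I would assemble the bound: $\theta_i = q_i\cdot\Theta(1) + (1-q_i)\cdot\Theta(1)$. The first summand is $\Theta(1)\cdot\Theta(1) = \Theta(1)$, and the second is $O(n^{-1})$, which is negligible; in particular $\theta_i \geqslant q_i\phi(t p_i q_i) = \Theta(1)$ for the lower bound and $\theta_i \leqslant \phi(|t|p_i) = O(1)$ (using monotonicity and that $|t|p_i = O(1)$) for the upper bound. Hence $\theta_i = \Theta(1)$, uniformly in $i$. The main point to be careful about — rather than a genuine obstacle — is the sign of $t$: for $t < 0$ one should use $\phi(x)\leqslant\tfrac12$ and the fact that $\phi$ stays bounded below by $\phi$ evaluated at the (bounded) negative argument, so that the $\Theta(1)$ conclusion survives; since all arguments remain in a fixed bounded interval regardless of the sign of $t$, this causes no real trouble. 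One then feeds $\theta_i = \Theta(1)$ and $\mathbf{Var}[X_i] = \Theta(n^{-5})$ into $\sigma^2 = 2\sum_i\theta_i\mathbf{Var}[X_i]$ to recover $\sigma^2 = \Theta(n^{-3})$, matching the variance computation and closing the corollary.
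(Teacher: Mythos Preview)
Your proof is correct and follows essentially the same approach as the paper: both arguments observe that $t p_i = O(1)$ forces the inputs to $\phi$ to lie in a bounded interval, whence $\phi(\cdot)=\Theta(1)$, and then conclude $\theta_i=\Theta(1)$. The only cosmetic difference is that the paper finishes by noting $\theta_i$ is a convex combination of two $\Theta(1)$ quantities, while you isolate the dominant term $q_i\phi(tp_iq_i)$ for the lower bound; your extra discussion of negative $t$ is not needed here since the corollary only uses $0<t\leqslant n^2$.
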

\begin{proof}[Proof of Claim]
Since $p_i = 1/m = O(1/n^2)$ and $t=O(n^2)$ we have $t p_i = O(1)$. Since $\phi$ is bounded around zero, and $\phi(0) = 1/2$ we have 
$\phi(x) = \Theta(1)$ for $x=O(1)$. Since $t p_i (1-q_i) \leqslant t p_i = O(1)$ and $t p_i q_i \leqslant t p_i = O(1)$ we conclude that
$\phi(t p_i q_i) = \Theta(1)$ and $\phi(t p_i(1-q_i)) = \Theta(1)$. Now the weight $\theta_i$ is a convex combination of those two, and thus $\theta_i = \Theta(1)$ as well.
\end{proof}

Now we have $\sum_i \theta_i \mathbf{Var}[X_i] = \Theta(n^{-3})$ when $0< t \leqslant n^2$, so
by \Cref{cor:main} we obtain
\begin{claim}
For the setup as above and $\sigma^2 = \Theta(n^{-3})$ we have
\begin{align}
\mathbf{E}\exp(t(M-\mathbf{E}M)) \leqslant \exp(O(t^2\sigma^2)),\quad 0<t\leqslant n^2.
\end{align}
\end{claim}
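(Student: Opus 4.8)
The plan is to obtain this claim as an immediate specialization of \Cref{cor:main} to the birthday-paradox weights, feeding in the order-of-magnitude estimates established in the preceding claims. By \Cref{cor:main}, for any real $t$ we have
\[
\mathbf{E}\exp(t(M-\mathbf{E}M)) \leqslant \exp\!\left(\tfrac{t^2\sigma_t^2}{2}\right),\qquad \sigma_t^2 = 2\sum_{i=1}^{m}\theta_i\,\mathbf{Var}[X_i],
\]
where I write $\sigma_t^2$ rather than $\sigma^2$ to stress that the weights $\theta_i$, and hence the variance proxy from \Cref{cor:main}, depend on $t$. So it suffices to prove that $\sigma_t^2 = \Theta(n^{-3})$ \emph{uniformly} over $0<t\leqslant n^2$; the stated bound then follows, with the constant hidden in $O(\cdot)$ being the one from this $\Theta$.

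For the uniform estimate I would just multiply the two claims already proved in this subsection: $\mathbf{Var}[X_i] = \Theta(n^{-5})$ for every $i$, and $\theta_i = \Theta(1)$ for every $i$ provided $0<t=O(n^2)$. This is exactly where the hypothesis $t\leqslant n^2$ enters: it ensures $tp_i = t/m = O(1)$, so that each $\phi$-value in the definition of $\theta_i$ lies in a compact subinterval of $(0,\infty)$ on which $\phi$ is continuous and positive, and the convex combination $\theta_i$ is therefore bounded away from $0$ and $\infty$ with constants not depending on $i$ or $t$. Summing over the $m$ symbols gives $\sigma_t^2 = \Theta(m)\cdot\Theta(n^{-5}) = \Theta(m\,n^{-5})$, and since $n=\Theta(m^{1/2})$, i.e. $m=\Theta(n^2)$, this equals $\Theta(n^{-3})$, which matches the IID variance proxy $\sigma^2 = \sum_i\mathbf{Var}[X_i] = \Theta(n^{-3})$ up to absolute constants.

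Combining the two steps, for $0<t\leqslant n^2$ we get $\mathbf{E}\exp(t(M-\mathbf{E}M)) \leqslant \exp(\tfrac12 t^2\sigma_t^2) = \exp(O(t^2 n^{-3})) = \exp(O(t^2\sigma^2))$, as claimed. I do not expect a genuine obstacle here beyond bookkeeping; the single point that requires attention is that the variance proxy in \Cref{cor:main} is itself $t$-dependent, so one must check the $\Theta(n^{-3})$ bound for the whole admissible range of $t$ rather than at a single point — and this is precisely what the restriction $t\leqslant n^2$ secures through the claim $\theta_i=\Theta(1)$.
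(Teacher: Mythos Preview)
Your proposal is correct and follows essentially the same route as the paper: invoke \Cref{cor:main}, then plug in the previously established estimates $\mathbf{Var}[X_i]=\Theta(n^{-5})$ and $\theta_i=\Theta(1)$ for $0<t\leqslant n^2$, and sum over the $m=\Theta(n^2)$ symbols to get the variance proxy $\Theta(n^{-3})$. Your explicit distinction between the $t$-dependent proxy $\sigma_t^2$ and the fixed $\sigma^2$ is a helpful clarification that the paper leaves implicit.
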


This is known to imply, by Chernoff's inequality~\cite{wainwright2019high}, the Bernstein-type bound
\begin{align}
\Pr[M-\mathbf{E}M \geqslant \epsilon] \leqslant \exp\left(-\frac{c\cdot\epsilon^2}{\sigma^2 + \epsilon/n^2}\right).
\end{align}
for a universal constant $c$.  Since we proved that we can assume $\epsilon = O(1/n)$, we get
$\epsilon/n^2 = O(\sigma^2/n)$ because $\sigma^2 = \Theta(n^{-3})$. Therefore the tail bound simplifies to $\mathrm{e}^{-\Omega(\epsilon^2/\sigma^2)}$.

\subsection{Proof of \Cref{cor:flexible}}

We note that $\mathbf{E}X_i^{k} \leqslant \mathbf{E}(X_i^{+})^k$ for any positive integer $k$, therefore in \Cref{main:lemma}
we get the bound 
\begin{align}
\mathbf{E}\exp\left(t S\right) \leqslant \left(1+ \frac{v^2 t^2}{m} \sum_{k\geqslant 2}\frac{ t^{k-2} b^{k-2}}{2} \right)^m = 
\left(1+ \frac{v^2 t^2}{m(1-tb)} \right)^m
\end{align}
where in the second step we assume $|tb|<1$; the bound on the moment generating function follows by the inequality $(1+x/m)^m\leqslant \exp(x)$ for positive $x$ and $m$.
The tail bound follows by  Chernoff's inequality and some optimization of the parameter
(see for example~\cite{wainwright2019high}).

\section{Numerical Comparison}

\bibliographystyle{plain}
\bibliography{citations}

\end{document}